\newcommand\R {{\mathbb R}} 
\newcommand\N {{\mathbb N}}
\newtheorem{theorem}{Theorem}[section]
\newtheorem{proposition}[theorem]{Proposition}
\theoremstyle{definition}
\newtheorem{definition}[theorem]{Definition}
\title[A positive function with vanishing Lebesgue integral in ZF]{A
positive function with vanishing Lebesgue integral in
Zermelo--Fraenkel set theory}
\author[V. K.]{Vladimir Kanovei} \address{V. Kanovei, Laboratory 6,
IPPI RAN, Bolshoy Karetny per.\;19, build.\;1, Moscow 127051, Russia;
and MIIT, 9b9 Obrazcova St, 127994 Moscow, Russia}
\email{kanovei@googlemail.com}
\author[M. Katz]{Mikhail G. Katz}\address{M. Katz, Department of
Mathematics, Bar Ilan University, Ramat Gan 5290002
Israel}\email{katzmik@macs.biu.ac.il}
\begin{document}

\begin{abstract}
Can a positive function on~$\R$ have zero Lebesgue integral?  It
depends on how much choice one has.  

Keywords: Lebesgue integral; Zermelo--Fraenkel theory; Feferman--Levy
model.  03E25; 28A25
\end{abstract}

\maketitle

\tableofcontents

\section{Introduction}

It is known that the proposition
\begin{quote}
$(\ast)$ the Lebesgue measure is countably additive
\end{quote}
is a weak form of the axiom of choice, i.e., it cannot be proved in
Zermelo--Fraenkel set theory ZF and it does not imply the axiom of
choice AC.  The Feferman--Levy model (model $\mathcal{M}9$ in the
monograph~\cite{HR}) witnesses the failure of $(\ast)$.  Thus, if one
wishes to work in a ZF environment where AC fails, one has to give up
countable additivity.  We show that, relative to a definition of
Lebesgue measure suitable for working in $\mathcal{M}9$, the Fubini
theorem and the assertion that ``a positive real-valued function on
$[0,1]$ has positive integral'' both fail in $\mathcal{M}9$.

Based on the typical working mathematician's intuitions of~$\R$, it
seems reasonable to expect that, on the one hand,
\begin{enumerate}
\item
\label{i1}
a function which is positive everywhere could not have a vanishing
Lebesgue integral; and, on the other,
\item
\label{i2}
this fact should admit a reasonably constructive proof, such as one
based on the traditional Zermelo-Fraenkel set theory (ZF).
\end{enumerate}
Curiously, while item~\eqref{i1} is correct as stated, item~\eqref{i2}
is incorrect and in fact there exist models of ZF admitting positive
functions with vanishing Lebesgue integral.

We start with the following geometric observation.  In a standard
M\"obius band~$M$, every interval orthogonal to the central circle of
$M$ is isometric to the segment~$(-\epsilon,\epsilon)$, but it is
impossible to choose such isometries simultaneously for all such
intervals on~$M$ in a continuous fashion, for that would imply
that~$M$ is a cylinder (rather than a M\"obius band).

To explain a similar phenomenon in a set-theoretic context, we first
introduce a distinction between a countable set~$A$, for which
\emph{there exists} a one-to-one correspondence between~$A$ and~$\N$,
and a \emph{counted} set~$B$, for which a specific identification
$\iota_B\colon B\to\N$ has been chosen; thus~$B$ is shorthand for the
pair~$(B,\iota_B)$.

A countable family~$\{B_{\alpha}\}_{\alpha\in \N}$ of (disjoint)
\emph{counted} sets~$B_\alpha$ is immediately identifiable with
$\N\times \N$.  Therefore the union~$\cup_{\alpha\in\N} B_\alpha$ is
countable by a snaking argument in~$\N\times \N$ familiar to each
freshman, in the context of enumerating $\mathbb Q$.

The snaking argument does not \emph{a priori} work in ZF for a
countable family~$\{A_{\alpha}\}_{\alpha\in \N}$ of (disjoint)
\emph{countable} sets~$A_\alpha$, because constructing an
identification with~$\N\times\N$ requires \emph{choosing} a particular
identification of~$A_\alpha$ with~$\N$ \emph{simultaneously} for
all~$\alpha$.  This procedure requires the axiom of countable choice
(ACC).

This might seem like a limitation of the particular proof of
countability that one might be able to overcome by a more judicious
procedure avoiding ACC.  However, it turns out that the obstruction is
genuine.

Cohen's work on the continuum hypothesis soon led to a model of~ZF,
called the Feferman--Levy (FL) model \cite{FL}, where the real number
line~$\R$ is a countable disjoint union of countable (but \emph{not}
counted) sets, whereas of course~$\R$ itself is uncountable by the
classical diagonal argument of Cantor (which does not rely on AC); see
\cite[chapter IV, section 4]{coh}.  The Lebesgue measure is
not~$\sigma$-additive in FL.
 
These phenomena do not contradict the categoricity of~$\R$ (namely,
its characterisation as the unique complete ordered field) since, as
is well known, the concept of categoricity is dependent on the
background model of set theory.

\section{On Lebesgue measure in the Feferman--Levy model}
\label{s2}

A crucial property of the Feferman--Levy model is the following:

\begin{quote}
(FL) 
The unit interval $[0,1]$ is equal to a countable union 
$[0,1]=\bigcup_{n} C_n$ 
of countable sets $C_n\subseteq [0,1]$.
\end{quote}
Due to property (FL), not every definition of the Lebesgue measure is
appropriate in this model.  In particular, any definition that
explicitly involves the algebra of Borel sets and countable
additivity, as e.g., in Halmos \cite{Ha76}, is immediately ruled out
as incompatible with property (FL).%
\footnote {However see Fremlin \cite{Fr08} for a hard roundabout in
terms of \emph{codable} countable unions, which enables one to salvage
\emph{some} countable additivity even in fully non-AC environments.}
Therefore we begin with an explicit definition of the Lebesgue measure
suitable for working with in any ZF environment, including the
choiceless FL model.  A set $X\subseteq \R$ is \emph{bounded} if
$X\subseteq[-c,c]$ for some finite $c<+\infty$.  We will consider the
measure with respect to bounded sets only.  The \emph{length}
$\text{len}(I)$ of any open or closed real interval $(a,b)$ or $[a,b]$
is equal to $b-a$.

\begin{definition}
[see \cite{Ta11}, 1.2.2, or \cite{KF} in slightly different terms]
The outer measure $m^*(X)$ of a bounded set $X\subseteq \R$ is equal
to the infimum of $\sum_n {\rm len}(I_n)$ over all coverings
$X\subseteq \bigcup_n I_n$ of $X$ by countably many open
intervals~$I_n$.

A bounded set $X\subseteq\R$ is \emph{Lebesgue measurable} (LM for
short) if for every $\varepsilon > 0$, there exists an open set $U$
containing $X$ such that $m^*(U\setminus X)<\varepsilon$.  If $X$ is
Lebesgue measurable, then $m(X) := m^*(X)$ is the Lebesgue measure
of~$X$.

The same definition of $m^*$ and $m$ works in any $n$-dimensional
Euclidean space $\R^n$, $n\ge2$, with intervals replaced by
\emph{boxes}, i.e., Cartesian products of $n$-many finite real
intervals.
\end{definition}
The following properties of the measure are provable in ZF and hence
true in the FL model.
\begin{enumerate}
\item[(I)] every interval $I=[a,b]$ or $(a,b)$ is LM and satisfies
$m(I)=\text{len}(I)$;
\item[(II)] $m$ is finitely additive;
\item[(III)] every finite or countable bounded set $X\subseteq \R$ is
LM and satisfies $m(X)=0$.
\end{enumerate}
It follows by (FL) that $m$ is {\it not\/} countably additive in 
the FL model. 

\begin{definition}
\label{d22}
A real function $f\colon I\to \R$ defined on a bounded real interval
$I$ is {\it Lebesgue measurable\/} (LM) iff each superlevel set
$\{x\in I\colon f(x) > y\}$ is LM.  In such case the auxiliary
function $g(y)=m(\{x\in I\colon f(x) > y\})$ is monotone and has a
well-defined Riemann integral.  The Lebesgue integral $\int_I f dx$ is
defined to be equal to the Riemann integral $\int_0^\infty g(y) dy$.
\end{definition}

\section{A positive function with zero Lebesgue integral in FL}

D.\;Fremlin \cite{Fr08} presents a careful development of Lebesgue
integration in a ZF context and shows that it satisfies a fundamental
theorem of calculus.  Note that various definitions that are
equivalent in a ZFC context may become inequivalent over ZF.  

We show that using the traditional definitions presented in
Section~\ref{s2} the Lebesgue integral of our positive function is
zero.

\begin{theorem}
[in ZF] 
\label{t11}
Suppose that~$[0,1]=\bigcup_{n\in\N} A_n$, where each~$A_n$ is
countable (as e.g., in FL) .  Then there is a positive real function
on~$[0,1]$ with zero Lebesgue integral.
\end{theorem}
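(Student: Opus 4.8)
The plan is to build a function whose positivity is \emph{concentrated} at the threshold $y=0$, where it contributes nothing to the Riemann integral defining the Lebesgue integral, while every strictly positive superlevel set is forced to be a \emph{finite} — not merely countable — union of the sets $A_n$. Then property~(III) together with finite additivity~(II) pins the measure of each such set at zero, without ever invoking the $\sigma$-additivity that fails in this setting.

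First I would put the decomposition into a usable form. Since each $A_n$ is countable, so is every subset of it (restricting an injection into~$\N$ needs no choice). In particular I may define, for each $x\in[0,1]$, the index $n(x)=\min\{n\in\N\colon x\in A_n\}$, which exists because the $A_n$ cover $[0,1]$ and because a nonempty subset of $\N$ has a least element. I then set
\[
f(x)=2^{-n(x)},
\]
which is everywhere positive since $2^{-n(x)}>0$ for every~$x$, so that no disjointification of the $A_n$ is required.

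Next I would compute the superlevel sets entering Definition~\ref{d22}. For $y<0$ one has $\{x\colon f(x)>y\}=[0,1]$, and the same holds at $y=0$ because $f>0$ everywhere; thus the auxiliary function $g$ satisfies $g(y)=m([0,1])=1$ for all $y\le 0$. For $y>0$ the inequality $2^{-n(x)}>y$ holds only for the finitely many indices $n$ with $2^{-n}>y$; writing $N(y)$ for the largest such index, I obtain
\[
\{x\colon f(x)>y\}=\bigcup_{n\le N(y)}A_n,
\]
a \emph{finite} union of countable sets. By property~(III) each $A_n$ is LM with $m(A_n)=0$, and a finite union of measure-zero sets is again LM with measure zero (here finite subadditivity of $m^*$, equivalently finite additivity~(II), enters). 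Hence every superlevel set is LM, so $f$ is LM, and $g(y)=0$ for each $y>0$.

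Finally I would evaluate the integral. The function $g$ equals $1$ on $(-\infty,0]$ and $0$ on $(0,\infty)$, so it is monotone with a well-defined Riemann integral, and
\[
\int_{[0,1]}f\,dx=\int_0^\infty g(y)\,dy=0,
\]
since $g$ vanishes on $(0,\infty)$ and the single value $g(0)=1$ does not affect the Riemann integral. The conceptual heart of the argument — and the step I expect to require the most care in exposition — is exactly the observation that for each strictly positive threshold the superlevel set collapses to a finite union of the $A_n$: this is what lets the measure-zero conclusion go through in ZF using only finite additivity, while the full-measure jump responsible for positivity is quarantined at $y=0$, where it is invisible to the Riemann integral.
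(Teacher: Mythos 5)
Your proposal is correct and follows essentially the same route as the paper's proof: a function taking a fixed positive value tending to $0$ on each piece $A_n$, whose superlevel sets at every positive threshold are finite unions of countable sets and hence null by (II) and (III), so that the auxiliary function $g$ vanishes on $(0,\infty)$ and the Riemann integral is zero. Your use of $n(x)=\min\{n\colon x\in A_n\}$ is a small refinement that makes $f$ well defined even when the $A_n$ overlap, and $2^{-n}$ versus $\frac{1}{n}$ is immaterial.
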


\begin{proof}
Suppose~$[0,1]=\bigcup_{n\in\N} A_n$ where each~$A_n$ is countable.
We will rely on Definition~\ref{d22} of the Lebesgue integral.
Consider the function~$f$ equal to~$\frac{1}{n}$ on the~$n$-th
countable set~$A_n, n=1,2,3,\ldots$, or in
formulas~$f\hspace{-5pt}\downharpoonright_{A_n}^{\phantom{I}}=\frac{1}{n}$;
alternatively, $f=\sum_n \frac{1}{n} \chi_{A_n}^{\phantom{I}}$.  Then
the auxiliary function~$g$ is identically zero.  Indeed, its
superlevel sets are finite unions of the form
$A_1\cup{}A_2\cup\dots\cup A_n$, hence, countable sets.%
\footnote {Note that there is no need for the axiom of choice to prove
that a \emph{finite} union of countable sets is itself countable, and
that a countable set has Lebesgue measure~$0$; see \cite{Fr08}.
Furthermore, $f$ is a Lebesgue-measurable function; indeed the
$f$-preimage of any interval $(a,+\infty)$ is either the entire
interval $[0,1]$ if $a\le 0$, or a finite union of sets $A_n$
otherwise, in which case it is a countable set.}
Therefore the Riemann integral of $g$ is zero, and hence the Lebesgue
integral of the given positive function $f$ is also zero.

The definition of Lebesgue integral in terms of simple functions gives
the same result.  For our function~$f(x)$ which equals $\frac{1}{n}$
when~$x\in A_n$, every simple function $s(x)\geq 0$ dominated by~$f$
will be nonzero at most at countably many points.  Therefore the
Riemann integral of $s(x)$ is zero.  Therefore the Lebesgue integral
of~$f$, which is the supremum of the integrals of the simple
functions, is also zero.  The infimum of the integrals of simple
functions that dominate~$f$ is similarly~$0$.  

Indeed, given a small $\epsilon=\frac1m$ where $m\ge 1$, we define $s$
by setting $s(x)=f(x)=\frac1n$ whenever $x$ belongs to $A_n$ and $n\le
m$, and $s(x)=\frac1m$ whenever $x$ belongs to $A_n$ and $n>m$.
Then~$s$ is a simple function whose Lebesgue integral by definition is
equal to $I_1+I_2+\dots+I_m+I$, where each $I_n= \frac1n\cdot\mu(A_n)$
is equal to $0$ since $A_n$ is countable, while
$I=\frac1m\mu(\bigcup_{n>m}A_n)\le\frac1m\mu([0,1])=\frac1m$, so
altogether~$\int_0^1s\,d\mu\le\frac1m$, as required.  Here $\mu$
denotes the Lebesgue measure on $[0,1]$, and the axiom of choice is
not used in this elementary argument.
\end{proof}

\section
{Violation of Fubini}

The original and best known form of the Fubini Theorem claims that
under certain conditions two iterated integrals coincide. However
there is another formulation of the result, which does not directly
involve integration. This form, useful in the context of Lebesgue
measure theory, is similar to the Kuratowski--Ulam theorem (see
\cite[8.41]{Ke95}) related to Baire category rather than measure.
Namely we have the following; cf.~\cite[exercise 565X(e),
p.\;221]{Fr08}.

\begin{proposition}[Fubini]
\label{p31}
Assume that~$P\subseteq \mathbb R^2$ is a Lebesgue measurable set.
Then~$P$ is null if and only if the set~$X$ of all reals~$x$ such that
``the cross-section~$P_x=\{y\colon(x,y)\in P\}$ is non-null'', is null
as well.
\end{proposition}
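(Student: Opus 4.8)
The plan is to reduce the equivalence to the classical Tonelli identity
\[ m_2(P)=\int_{\mathbb R} m(P_x)\,dx, \]
valid for a measurable set $P\subseteq\mathbb R^2$, where $m_2$ is the two-dimensional and $m$ the one-dimensional Lebesgue measure. Once this is in hand, both directions are immediate: the fiber function $x\mapsto m(P_x)$ is non-negative, so its integral vanishes precisely when $m(P_x)=0$ for almost every $x$. That is, $m_2(P)=0$ if and only if the set $X=\{x:m(P_x)>0\}$ is null, which is exactly the asserted equivalence.

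To expose the role of countable additivity I would prove the forward implication ($P$ null $\Rightarrow$ $X$ null) by hand. Fix $\epsilon>0$, and (assuming $P$ bounded, nullity being a local property) use $m_2(P)=0$ to cover $P$ by countably many open boxes $I_j\times J_j$ with $\sum_j\text{len}(I_j)\,\text{len}(J_j)<\epsilon$. For each $x$ the subfamily of boxes with $x\in I_j$ covers the fiber, so directly from the definition of outer measure $m^*(P_x)\le g(x):=\sum_j\text{len}(J_j)\,\chi_{I_j}(x)$. Term-by-term integration of this non-negative series---the first use of $\sigma$-additivity---gives $\int g=\sum_j\text{len}(I_j)\,\text{len}(J_j)<\epsilon$. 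By Markov's inequality the measurable set $\{x:g(x)>1/n\}$ has measure at most $n\epsilon$, and it contains $\{x:m^*(P_x)>1/n\}$; as $\epsilon$ is arbitrary, each such slice is null, and hence so is the countable union $X=\bigcup_n\{x:m^*(P_x)>1/n\}$---the second use of $\sigma$-additivity.

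For the converse ($X$ null $\Rightarrow$ $P$ null) I would invoke the measurability hypothesis on $P$ together with the Tonelli identity above: if $m(P_x)=0$ almost everywhere, then the right-hand integral vanishes, forcing $m_2(P)=0$. Unlike the forward direction, this implication genuinely requires $P$ to be measurable and cannot be recovered from the fiber information alone.

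The \emph{main obstacle} is not any single estimate but the pervasive reliance on $\sigma$-additivity---in integrating the series for $g$, in assembling the countable union $X$ from its slices, and in the Tonelli identity itself. This is precisely why the equivalence, which holds in ZFC, is the natural candidate to fail in the Feferman--Levy model: by Theorem~\ref{t11} and property (FL), countable additivity is unavailable there, and the remainder of the section is devoted to turning this failure into an explicit measurable set $P$ that violates the equivalence.
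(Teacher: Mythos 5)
The paper does not actually prove Proposition~\ref{p31}: it is quoted as a known ZFC fact, with a pointer to Fremlin's exercise 565X(e), and the whole point of the surrounding section is that it \emph{fails} under the hypothesis of Theorem~\ref{t11}. So there is no in-paper argument to compare yours against; what can be judged is whether your ZFC proof is sound, and it is. The forward direction is the standard covering argument: from $m_2^*(P)=0$ extract boxes $I_j\times J_j$ of total area $<\epsilon$, dominate $m^*(P_x)$ by $g(x)=\sum_j \mathrm{len}(J_j)\chi_{I_j}(x)$, integrate term by term, and apply Markov; the passage from ``each slice $\{x: m^*(P_x)>1/n\}$ has outer measure $\le n\epsilon$ for every $\epsilon$'' to ``$X$ is null'' is correct, and your reduction to bounded $P$ is legitimate since a countable union of null sets is null in ZFC. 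The converse correctly leans on the Tonelli identity $m_2(P)=\int m(P_x)\,dx$ for measurable $P$, which is where the measurability hypothesis is genuinely used (you rightly note the forward direction does not need it). Your closing diagnosis --- that the proof consumes countable additivity at three separate points, which is exactly the resource destroyed by property (FL) --- is precisely the moral the paper draws when it goes on to construct the counterexample $P=\{(x,y): 0<y<f(x)\}$ in the FL model. Two small caveats worth making explicit if this were written out in full: term-by-term integration of the series for $g$ is monotone convergence (itself a $\sigma$-additivity statement), and the measurability of $x\mapsto m(P_x)$ needed for Tonelli should be cited for measurable $P$ rather than taken for granted.
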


\begin{theorem}[in ZF]
In the hypotheses of Theorem\;$\ref{t11}$, Proposition\;$\ref{p31}$
fails.
\end{theorem}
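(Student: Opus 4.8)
The plan is to exploit the positive function $f$ of Theorem~\ref{t11} directly: its \emph{region under the graph} will be a planar set that is null, yet whose vertical sections are all non-null, and this is precisely a failure of the ``only if'' direction of Proposition~\ref{p31}. Concretely, for $x\in[0,1]$ let $\nu(x)$ be the least $n$ with $x\in A_n$ and set $f(x)=1/\nu(x)$, so that $f$ is the positive function of Theorem~\ref{t11}; then define
$$P=\bigl\{(x,y): x\in[0,1],\ 0<y<f(x)\bigr\}\subseteq[0,1]\times(0,1].$$
First I would compute the vertical cross-sections: for each $x$ we have $P_x=\{y:(x,y)\in P\}=(0,f(x))$, an interval of length $f(x)>0$, so $m(P_x)=f(x)>0$ and $P_x$ is non-null. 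Hence the set $X$ of Proposition~\ref{p31} is all of $[0,1]$, which is non-null.

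The substance of the argument is to show that $P$ is Lebesgue measurable and null, \emph{without} invoking countable additivity (which fails in FL). The key is a finite/tail split mirroring the last paragraph of the proof of Theorem~\ref{t11}: given $m\ge1$, writing $L_k=\{x:\nu(x)=k\}$ one checks the two-set inclusion
$$P\subseteq\Bigl(\bigl(\textstyle\bigcup_{k\le m}L_k\bigr)\times(0,1]\Bigr)\cup\bigl([0,1]\times(0,\tfrac1m)\bigr).$$
Here $C_m:=\bigcup_{k\le m}L_k$ is a \emph{finite} union of countable sets, hence countable by the ZF fact already used in Theorem~\ref{t11}; fixing a single surjection $\N\to C_m$ lets me cover $C_m\times(0,1]$ by open boxes of arbitrarily small total area, so $m^*\bigl(C_m\times(0,1]\bigr)=0$, while the second set is contained in a single box of area $\tfrac1m$. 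By finite subadditivity of $m^*$ (valid in ZF) we get $m^*(P)\le 0+\tfrac1m$. Letting $m\to\infty$ yields $m^*(P)=0$, and an outer-measure-zero set is LM with $m(P)=0$.

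Combining the two computations, $P$ is null while $X=[0,1]$ is non-null, contradicting the equivalence asserted in Proposition~\ref{p31}; this is the desired failure of Fubini in the FL model. I expect the main obstacle to be precisely the nullity of $P$: the naive argument ``$P$ is a countable union of the null slabs $L_k\times(0,1/k)$, hence null'' is exactly the use of countable subadditivity that is unavailable here (indeed false, since $[0,1]=\bigcup_n A_n$ already violates it). The point of the finite/tail decomposition is that the infinite tail is swallowed by one box of small area, so that only finite subadditivity and the countability of a \emph{finite} union are ever used, keeping the whole argument inside ZF.
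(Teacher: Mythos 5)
Your proposal is correct and follows essentially the same route as the paper: the same region $P=\{(x,y):0<y<f(x)\}$ under the graph of $f$, the same observation that every vertical section is a non-degenerate interval, and the same finite/tail decomposition (a countable finite union of the $A_k$ times a full vertical strip, plus one thin horizontal slab absorbing the tail) so that only finite subadditivity is ever invoked. The only cosmetic differences are that you phrase the estimate via outer measure while the paper exhibits explicit open covers of measure $<\tfrac{2}{n}$, and you index by the least level $\nu(x)$ rather than assuming the $A_n$ disjoint.
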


\begin{proof}
Consider a decomposition $I=\bigcup_nA_n$ of the unit
interval~$I=[0,1]$ where each~$A_n$ is countable.  Now let~$f\colon
I\to\mathbb R$ be the function defined as in the proof of
Theorem~\ref{t11}.  Let~$P$ be the set of all pairs~$(x,y)$ such that
$0<y<f(x)$. We claim that~$P$ is a null set. Indeed, given
$\epsilon=\frac1n$, we have to cover~$P$ by an open planar set~$U$ of
measure~$<\frac2n$.  We let~$P=P'\cup P''$, where~$P'=\{(x,y)\in
P\colon x \in \bigcup_{k<n}A_k\}$ and~$P''=\{(x,y)\in P\colon x \in
\bigcup_{k\ge n}A_k\}$.  Note that if~$x\in A_k$,~$k\ge n$,
then~$f(x)\le \frac1n$, therefore~$P''$ is covered by the open
rectangle~$U''=(0,1)\times (0,\frac1n)$ of measure~$\frac1n$.

On the other hand, the projection of~$P'$ to the horizontal axis is
equal to the countable set~$I'=\bigcup_{k<n}A_k$. Therefore~$I'$ is a
null set, and we can cover it by an open set~$G$ of measure
$<\frac1n$.  Then~$P'$ is covered by~$U'=G\times[0,1]$, an open set
of planar measure~$<\frac1n$.

Finally, the union~$U=U'\cup U''$ is an open planar set of measure
$<\frac2n$ which covers~$P$, as required.  Thus~$P$ is a null set.

On the other hand, if~$x\in I$ then the vertical cross-section~$P_x$
defined by~$P_x=\{y\colon(x,y)\in P\}$ is a non-empty open interval,
hence, a non-null set. It follows that the set~$X$ as in Proposition
\ref{p31} is equal to~$I$, and hence is non-null. Thus~$P$ violates
Proposition~\ref{p31}, as required.
\end{proof}

There are various definitions of integration, and the relations among
them and the Lebesgue measure, to which mathematicians are accustomed,
sometimes depend on the axiom of choice (AC).  Accordingly, if~AC
fails then such relationships may fail as well.  In particular, in
the~FL model, where even countable choice is not available, an
elementary standard property of integration fails as well, as we
demonstrate in this note.

\section*{Acknowledgments}

We are grateful to the referees for a number of helpful suggestions.
V.\;Kanovei was supported in part by the RFBR grant
number~17-01-00705.  M.\;Katz was partially funded by the Israel
Science Foundation grant number~1517/12.

\end{document}